\documentclass[11pt,a4paper]{amsart}

\usepackage[margin=1in]{geometry}
\usepackage{amsmath,amssymb,amsthm,mathtools,mathrsfs,microtype,charter,cite}
\usepackage{aliascnt}
\usepackage{enumitem,doi}
\usepackage{xcolor,pgfplots}
\allowdisplaybreaks
\pgfplotsset{compat=1.18}

\newtheorem{theorem}{Theorem}[section]
\newaliascnt{proposition}{theorem}
\newtheorem{proposition}[proposition]{Proposition}
\aliascntresetthe{proposition}
\newaliascnt{lemma}{theorem}
\newtheorem{lemma}[lemma]{Lemma}
\aliascntresetthe{lemma}
\newaliascnt{corollary}{theorem}

\aliascntresetthe{corollary}
\theoremstyle{definition}
\newaliascnt{definition}{theorem}

\aliascntresetthe{definition}
\newaliascnt{assumption}{theorem}

\aliascntresetthe{assumption}
\theoremstyle{remark}
\newaliascnt{remark}{theorem}
\newtheorem{remark}[remark]{Remark}
\aliascntresetthe{remark}

\usepackage[capitalise,nameinlink,noabbrev]{cleveref}

\crefname{theorem}{theorem}{theorems}
\Crefname{theorem}{Theorem}{Theorems}
\crefname{proposition}{proposition}{propositions}
\Crefname{proposition}{Proposition}{Propositions}
\crefname{lemma}{lemma}{lemmas}
\Crefname{lemma}{Lemma}{Lemmas}
\crefname{corollary}{corollary}{corollaries}
\Crefname{corollary}{Corollary}{Corollaries}
\crefname{definition}{definition}{definitions}
\Crefname{definition}{Definition}{Definitions}
\crefname{assumption}{assumption}{assumptions}
\Crefname{assumption}{Assumption}{Assumptions}
\crefname{remark}{remark}{remarks}
\Crefname{remark}{Remark}{Remarks}

\newcommand{\R}{\mathbb{R}}
\newcommand{\eps}{\varepsilon}
\newcommand{\CD}[1]{\partial_t^{#1}}
\newcommand{\Hess}{\operatorname{Hess}}
\newcommand{\dd}{\mathrm{d}}
\DeclareMathOperator{\supp}{supp}

\newcommand{\rev}[1]{{\color{black}#1}}
\newenvironment{revision}{\begingroup\color{black}}{\endgroup}

\begin{document}

\title[A semiconvex counterexample to energy monotonicity for time-fractional gradient flows]{\Large A semiconvex counterexample to energy monotonicity \\[.1cm] for time-fractional gradient flows}

\author{Marvin Fritz}

\address{Marvin Fritz,
marvin.fritz@univie.ac.at, Faculty of Mathematics, University of Vienna, Oskar-Morgenstern-Platz 1, 1090 Vienna, Austria
}

\date{}

\maketitle

\mbox{}\vspace{-1cm}

\begin{abstract}
\noindent
For time-fractional gradient flows, natural dissipation statements are often
integrated or memory-modified rather than pointwise differential inequalities
for the original energy. We construct a smooth, compactly supported, globally
semiconvex energy and an absolutely continuous solution on a finite time
interval of a time-fractional gradient flow along which the original energy is
strictly increasing on an explicit subinterval. The construction incorporates
the initial layer directly into the trajectory. After transformation, the curve
and its fractional derivative are polynomial, the curve is a regular immersion,
and the Hessian remains bounded below up to the anchor.
\end{abstract}

\section{Introduction}

\begin{revision}
Time-fractional gradient flows arise when the instantaneous relaxation in a
classical gradient system is replaced by a history-dependent Caputo derivative.
This replacement is natural in subdiffusive models, but it changes the meaning
of energy dissipation: the positive-definiteness of the Abel convolution gives
robust \emph{integrated} stability estimates, while pointwise decay of the
original energy is much more delicate.
\end{revision}

This distinction is already visible in the time-fractional phase-field
literature, which typically employs the semiconvex Ginzburg--Landau energy. Tang et al.~proved an integral-type energy dissipation law and
numerical stability results for time-fractional phase-field equations, including
Allen--Cahn, Cahn--Hilliard and molecular beam epitaxy models
\cite{TangYuZhou2019}. Quan et al.~constructed a decreasing upper
bound for the energy of the time-fractional Allen--Cahn equation, interpretable
as a nonlocal-in-time modified energy consisting of the original energy plus a
memory accumulation term; they also indicate the applicability of the idea to
other phase-field equations such as Cahn--Hilliard \cite{QuanTangWangYang2022}.
Li et al.~proved fractional energy dissipation laws for nonlocal energy
functionals in a class including Allen--Cahn and Cahn--Hilliard equations
\cite{LiQuanXu2021}. A related augmented-energy viewpoint was developed by
Fritz et al., where the memory effect is represented through
an equivalent integer-order gradient flow in an extended state space
\cite{FritzKhristenkoWohlmuth2023}.

\begin{revision}
There is also a substantial numerical literature for the Allen--Cahn and
Cahn--Hilliard cases. Du et al.~analyzed the time-fractional Allen--Cahn
equation and studied discrete energy dissipation in a weighted average sense
\cite{DuYangZhou2020}. Zhao et al.~studied the coarsening dynamics of
time-fractional phase-field models and reported power-law scaling behavior for
effective free energies in several models, including Cahn--Hilliard-type
dynamics \cite{ZhaoChenWang2018}. Further investigations on the energy in these
time-fractional phase-field models are mentioned in
\cite{LiaoTangZhou2020,ZhangHuangAlikhanovYin2023,ZhangZhaoWang2020,FritzRajendran}.
As another prominent time-fractional gradient flow with an entropy inequality,
we mention the works \cite{FritzSuliWohlmuth,FritzFokker} on the Fokker--Planck
equation with time-fractional derivatives.
\end{revision}

\begin{revision}
At the abstract level, time-fractional gradient flows associated with convex
lower semicontinuous energies were studied in
\cite{Akagi2019,LiSalgado2021,FritzKhristenkoWohlmuth2023}. More recently,
Akagi and Nakajima developed a Hilbert-space theory for nonconvex energies based
on differences of subdifferentials and Lipschitz perturbations
\cite{AkagiNakajima2025}. These works provide the natural functional-analytic
background for the present example.
\end{revision}

The purpose of this short paper is to show that global semiconvexity, even with
smooth compact support, does not force pointwise monotonicity of the original
energy along every exact solution. The counterexample is consistent with the
known dissipation laws cited above: the monotone quantity is an integrated or
memory-modified energy, while the original energy may increase over a finite
interval. The scalar mechanism is the same gap as observed in \cite{Diethelm2016Monotonicity}: the sign
of one fixed Caputo derivative does not determine ordinary monotonicity.

\section{Setup and the question}

\begin{revision}
Let $E\in C^2(\R^d)$ and $0<\alpha<1$. For an absolutely continuous curve
$u\in AC([0,T];\R^d)$, we define its Caputo derivative by
\[
  \CD{\alpha}u:=J^{1-\alpha}[u'],
  \qquad
  J^{\beta}[v](t):=\frac1{\Gamma(\beta)}
  \int_0^t(t-s)^{\beta-1}v(s)\,\dd s,
  \quad 0<\beta<1.
\]
The corresponding classical Caputo equation is
\begin{equation}\label{eq:flow}
  \CD{\alpha}u(t)=-\nabla E\bigl(u(t)\bigr),
  \qquad u(0)=u_0,
  \qquad t\in(0,T].
\end{equation}
We call $u\in C([0,T];\R^d)$ a \emph{mild solution} if it satisfies the
Volterra equation
\begin{equation}\label{eq:mild}
  u(t)=u_0-\frac1{\Gamma(\alpha)}
  \int_0^t(t-s)^{\alpha-1}\nabla E(u(s))\,\dd s,
  \qquad 0\le t\le T.
\end{equation}
If a mild solution is absolutely continuous, then the standard fractional
integral identities
$J^\alpha\CD{\alpha}u=u-u(0)$ and
$\CD{\alpha}J^\alpha f=f$ show that \eqref{eq:mild} and
\eqref{eq:flow} are equivalent almost everywhere. In the explicit construction
below, the Caputo equation holds pointwise for every $t>0$.

In many time-fractional gradient-flow settings, the natural dissipation
statement is not a pointwise differential inequality for the original energy,
but an integrated or memory-modified estimate. In the present paper we do not
use such an abstract estimate as an input. Instead, for the explicitly
constructed solution below we verify directly that
\begin{equation}\label{eq:bounded}
  E(u(t))\le E(u(0))\qquad\text{for }0\le t\le1,
\end{equation}
even though $t\mapsto E(u(t))$ is strictly increasing on a nonempty subinterval.

The pointwise question is whether the ordinary derivative of the original
energy must satisfy
\[
  \frac{\dd}{\dd t}E(u(t))\le0.
\]
If $u\in AC([0,T];\R^d)$ and $E\in C^1(\R^d)$, then $E\circ u$ is absolutely
continuous and the ordinary chain rule gives, for almost every $t\in(0,T)$,
\begin{equation}\label{eq:Edot}
  \frac{\dd}{\dd t}E(u(t))
  =\bigl\langle\nabla E(u(t)),u'(t)\bigr\rangle
  =-\bigl\langle\CD{\alpha}u(t),u'(t)\bigr\rangle
  =-\bigl\langle J^{1-\alpha}[u'](t),u'(t)\bigr\rangle.
\end{equation}
For the explicit curve below, this identity holds pointwise for every $t>0$.
Thus pointwise monotonicity would require pointwise nonnegativity of the memory
form
\[
  Q_\alpha[u'](t):=\bigl\langle J^{1-\alpha}[u'](t),u'(t)\bigr\rangle.
\]
By contrast, for continuous vector-valued functions $v$ the positive-definiteness
of Abel convolution gives the integrated inequality
\[
  \int_0^T\bigl\langle J^{1-\alpha}[v](t),v(t)\bigr\rangle\,\dd t\ge0;
\]
see \cite[Lemma~3.1]{Mustapha2014c}, applied componentwise with convolution
order $1-\alpha$. This background inequality is not used below and is not being
asserted here directly for the explicit singular velocity $u'\in L^1(0,1)$.
The construction exploits precisely the absence of a corresponding pointwise
sign condition. Its scalar mechanism is that the sign of one fixed Caputo
derivative does not determine ordinary monotonicity.
\end{revision}

\section{The energy and its bounded Hessian}
\label{sec:lemma}

\begin{revision}
In the following, we use $\alpha=\frac12$. Let
$\eps\in(0,\tfrac1{2\sqrt2})$ and $M>0$ be arbitrary; the numerical
illustrations use $\eps=0.1$ and $M=1$.
\end{revision}
\rev{We set}
\begin{equation}\label{eq:u}
  u(t)=\begin{pmatrix} \eps\sqrt t \\ \sqrt t\,(1-t)\end{pmatrix}
      =\begin{pmatrix} \eps t^{1/2}\\ t^{1/2}-t^{3/2} \end{pmatrix}.
\end{equation}
\begin{revision}
For $p>0$ and $t>0$, the Caputo power formula
$\CD{1/2}t^{p}=\frac{\Gamma(p+1)}{\Gamma(p+\frac12)}\,t^{p-1/2}$ gives
\end{revision}
\begin{equation}\label{eq:CDu}
  \CD{1/2}u(t)=\begin{pmatrix} \eps\,\Gamma(\tfrac32) \\ \Gamma(\tfrac32)-\Gamma(\tfrac52)\,t \end{pmatrix}
  \qquad \text{ with } \Gamma(\tfrac32)=\tfrac{\sqrt\pi}2,\ \ \Gamma(\tfrac52)=\tfrac32\Gamma(\tfrac32).
\end{equation}
The second component exhibits a sign mismatch as used in \cite{Diethelm2016Monotonicity}: its ordinary
derivative $$\partial_t u_2=\tfrac1{2\sqrt t}(1-3t)$$ changes sign at $t=\tfrac13$,
while $$(\CD{1/2}u)_2=\Gamma(\tfrac32)-\Gamma(\tfrac52)t$$ changes sign at
$$t=\Gamma(\tfrac32)/\Gamma(\tfrac52)=\tfrac23.$$ The mismatch window is therefore
$(\tfrac13,\tfrac23)$.

The crucial structural point is the \emph{clock} $\eps\sqrt t$: its Caputo
derivative is the \emph{constant} $\eps\Gamma(\tfrac32)$. Equivalently, in the
variable
\begin{equation}\label{eq:s}
  s:=\sqrt t,\qquad r(s):=u(s^2)=\begin{pmatrix} \eps s \\ s-s^3\end{pmatrix},
\end{equation}
\begin{revision}
both $r$ and $\CD{1/2}u(s^2)$ are \emph{polynomial in $s$}. Moreover, $r$
is injective because its first component is $\eps s$, and it is a regular
immersion since
\[
  |r'(s)|^2=\eps^2+(1-3s^2)^2\ge\eps^2>0.
\]
\end{revision}

\begin{revision}
We prescribe the gradient along the curve so that \eqref{eq:flow} holds:
\[
\nabla E_0(r(s))=-\CD{1/2}u(s^2)=
\begin{pmatrix}
-\eps\Gamma(\tfrac32) \\
-\Gamma(\tfrac32)+\Gamma(\tfrac52)s^2
\end{pmatrix},
\qquad 0\le s\le1.
\]
\end{revision}
A local realization is the curved valley
\begin{equation}\label{eq:E0}
  E_0(x,y)=B(x)-\Phi(\zeta)\,w+\tfrac{M}{2}\,w^2,
  \qquad
  \zeta=\tfrac{x}{\eps},\quad w=y-\psi(x),
\end{equation}
\begin{revision}
with floor, profile, and base defined by
\begin{equation}\label{eq:pieces}
\begin{aligned}
  \psi(x)&=\zeta-\zeta^3,
  &\Phi(\zeta)&=\Gamma(\tfrac32)-\Gamma(\tfrac52)\zeta^2,\\
  B'(x)&=-\eps\Gamma(\tfrac32)-\Phi(\zeta)\,\psi'(x),
  &B(0)&=0.
\end{aligned}
\end{equation}
Thus $B$ is the uniquely normalized antiderivative, and $M$ is the chosen
transverse stiffness. A direct computation gives the global gradient
\end{revision}
\begin{equation}\label{eq:grad}
  \nabla E_0(x,y)=\begin{pmatrix}
    -\eps\Gamma(\tfrac32)
    -\bigl(\tfrac{\Phi'(\zeta)}{\eps}+M\psi'(x)\bigr)w\\[2pt]
    -\Phi(\zeta)+M w
  \end{pmatrix},
\end{equation}
\begin{revision}
which is polynomial and agrees with the prescribed trace
$-\CD{1/2}u(s^2)$ at $(x,y)=r(s)$. Hence $E_0\in C^\infty(\R^2)$.
\end{revision}
Along the curve \(r(s)\), we have \(x=\eps s\). For notational brevity set
\[
  p(s):=\psi'(\eps s)=\frac{1-3s^2}{\eps},
  \qquad
  q(s):=\Phi'(s)=-2\Gamma(\tfrac52)s .
\]
The Hessian along the curve is therefore
\begin{equation}\label{eq:hess}
  \Hess E_0\bigl(r(s)\bigr)
  =
  \begin{pmatrix}
    \dfrac{q(s)p(s)}{\eps}+M p(s)^2
    &\quad 
    -\dfrac{q(s)}{\eps}-M p(s)\\[6pt]
    -\dfrac{q(s)}{\eps}-M p(s)
    &
    M
  \end{pmatrix}.
\end{equation}

\begin{lemma}[No anchor obstruction for $\sqrt t$-clocks]\label{lem:noblow}
\begin{revision}
Along $r$, the tangential quadratic form---which is determined by the
prescribed gradient trace along the curve---is bounded and satisfies
\begin{equation}\label{eq:tang}
\begin{aligned}
  r'(s)^{\!\top}\Hess E_0(r(s))\,r'(s)
  &=\frac{\dd}{\dd s}\bigl[\nabla E_0(r(s))\bigr]\cdot r'(s)\\
  &=2\,\Gamma(\tfrac52)\,s\,(1-3s^2),
  \qquad s\in[0,1].
\end{aligned}
\end{equation}
Moreover, the full Hessian matrix $\Hess E_0(r(s))$ has bounded entries on
$[0,1]$. Consequently,
\[
  \inf_{s\in[0,1]}
  \lambda_{\min}\!\bigl(\Hess E_0(r(s))\bigr)>-\infty.
\]
\end{revision}
\end{lemma}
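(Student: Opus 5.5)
The plan is to differentiate the gradient trace along the curve, then bound the Hessian entries directly in the variable $s$.

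\emph{Tangential identity.} Since $E_0\in C^\infty(\R^2)$, the chain rule gives
\[
\frac{\dd}{\dd s}\nabla E_0(r(s))=\Hess E_0(r(s))\,r'(s).
\]
Pairing this with $r'(s)$ yields the first equality in \eqref{eq:tang}, using only the symmetry of the Hessian. The point is that the left side therefore depends only on the prescribed trace $\nabla E_0(r(s))=-\CD{1/2}u(s^2)$, which \eqref{eq:grad} confirms at $w=0$. This trace has first component $-\eps\Gamma(\tfrac32)$, which is constant, and second component $-\Gamma(\tfrac32)+\Gamma(\tfrac52)s^2$. Its $s$-derivative is $(0,\,2\Gamma(\tfrac52)s)^\top$. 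Since $r'(s)=(\eps,\,1-3s^2)^\top$, the dot product is $2\Gamma(\tfrac52)s(1-3s^2)$. This is a polynomial, hence bounded on $[0,1]$.

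\emph{Bounded entries.} I would use the explicit formula \eqref{eq:hess}, which is obtained by differentiating \eqref{eq:grad} and evaluating at $w=0$. I would double-check it by noting that, at $w=0$, only the terms in \eqref{eq:grad} linear in $w$ contribute through $\partial_x w=-\psi'(x)$ and $\partial_y w=1$, while $\partial_x$ of the first component at $w=0$ picks up $(\Phi'/\eps+M\psi')\psi'$. Here $p(s)=(1-3s^2)/\eps$ and $q(s)=-2\Gamma(\tfrac52)s$ are polynomials in $s$ with coefficients depending only on the fixed $\eps>0$ and $M$. So every entry of \eqref{eq:hess} is a polynomial in $s$ and is therefore bounded on the compact interval $[0,1]$.

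The essential observation, and the only real obstacle, is the anchor $s=0$ ($t=0$). In the original time variable, $u'(t)\sim t^{-1/2}$ blows up, and one might fear that Hessian entries inherit factors like $1/\sqrt t$. The reparametrization $s=\sqrt t$ removes this: $r$ is a polynomial regular immersion, with $|r'|\ge\eps$ as shown earlier. Since $x=\eps s$ is a smooth global coordinate along the curve, no singular factor survives. I would stress that the boundedness is uniform up to and including $s=0$.

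\emph{Eigenvalue bound.} For a symmetric $2\times2$ matrix $H$,
\[
\lambda_{\min}(H)\ge-\|H\|_{\mathrm{op}}\ge-\|H\|_F.
\]
Taking $C:=\sup_{s\in[0,1]}\|\Hess E_0(r(s))\|_F<\infty$ from the previous step gives $\inf_s\lambda_{\min}\ge -C>-\infty$. This concludes the plan. Equivalently, $\lambda_{\min}$ is a continuous function of $s$ on the compact interval $[0,1]$, so its infimum is attained and finite.
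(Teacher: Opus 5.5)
Your proposal is correct and follows the paper's proof: the chain rule $\frac{\dd}{\dd s}\nabla E_0(r(s))=\Hess E_0(r(s))\,r'(s)$ paired with $r'(s)=(\eps,1-3s^2)$ gives \eqref{eq:tang}, the polynomial entries of \eqref{eq:hess} give boundedness on $[0,1]$, and continuity of the eigenvalues in the entries (equivalently, your bound $\lambda_{\min}\ge-\|H\|_F$) gives the finite infimum. The remaining differences are cosmetic: the paper also notes that $\Hess E_0(r(0))$ is positive semidefinite with eigenvalues $0$ and $M(1+\eps^{-2})$, and the first equality in \eqref{eq:tang} does not actually need symmetry of the Hessian.
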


\begin{proof}
The identity \eqref{eq:tang} follows from
\[
  \frac{\dd}{\dd s}\nabla E_0(r(s))
  =
  \Hess E_0(r(s))\,r'(s),
\]
with $r'(s)=(\eps,1-3s^2)$ and
\[
  \frac{\dd}{\dd s}\nabla E_0(r(s))
  =
  \begin{pmatrix}0\\ 2\Gamma(\tfrac52)s\end{pmatrix}.
\]
Since $p(s)$ and $q(s)$ are polynomial functions of $s$, the displayed formula
\eqref{eq:hess} shows that every entry of $\Hess E_0(r(s))$ is bounded on
$[0,1]$. Hence the eigenvalues, being continuous functions of the matrix
entries, are bounded on $[0,1]$.
\begin{revision}
At $s=0$ one has
\[
  \Hess E_0(r(0))
  =
  M\begin{pmatrix}\eps^{-2}&-\eps^{-1}\\-\eps^{-1}&1\end{pmatrix},
\]
which is positive semidefinite with eigenvalues $0$ and $M(1+\eps^{-2})$.
\end{revision}
\end{proof}

\begin{theorem}[Strict increase of the original energy]\label{thm:increase}
\begin{revision}
With $E_0$ as in \eqref{eq:E0} and $u$ as in \eqref{eq:u}, for every
$t\in(0,1]$,
\begin{equation}\label{eq:Edotclosed}
  \frac{\dd}{\dd t}E_0(u(t))
  =-\bigl\langle\CD{1/2}u(t),u'(t)\bigr\rangle
  =\frac{\Gamma(\tfrac32)}{2\sqrt t}
   \Bigl[\tfrac92\,t(1-t)-(1+\eps^2)\Bigr].
\end{equation}
Hence $\frac{\dd}{\dd t}E_0(u(t))>0$ precisely on the interval
\begin{equation}\label{eq:window}
  t\in(t_-,t_+),\qquad
  t_\pm=\frac12\Bigl(1\pm\sqrt{1-\tfrac{8(1+\eps^2)}9}\Bigr),
\end{equation}
which is nonempty if and only if $\eps<\tfrac1{2\sqrt2}$. The energy derivative
attains its global maximum on $(0,1]$ at
\begin{equation}\label{eq:tstar}
  t_*=\frac{9+\sqrt{81+216(1+\eps^2)}}{54}.
\end{equation}
For $\eps=0.1$,
\[
  (t_-,t_+)\approx(0.34013895,\,0.65986105),\qquad
  t_*\approx0.48696745,
\]
and
\[
  \max_{0<t\le1}\frac{\dd}{\dd t}E_0(u(t))
  \approx0.07253820.
\]
As $\eps\downarrow0$, the endpoints $t_-$ and $t_+$ converge to
$\tfrac13$ and $\tfrac23$, respectively; the construction itself always assumes
$\eps>0$.
\end{revision}
\end{theorem}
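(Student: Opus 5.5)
The plan is to verify \eqref{eq:Edotclosed} by a direct computation and then reduce every remaining claim to elementary one-variable calculus. Since $E_0\in C^\infty(\R^2)$ and $u$ is $C^1$ on $(0,1]$, the classical chain rule applies pointwise for $t>0$. By construction of $E_0$ (the gradient trace in \eqref{eq:grad} agrees with $-\CD{1/2}u(s^2)$ at $r(s)$, with $s=\sqrt t$), we have $\nabla E_0(u(t))=-\CD{1/2}u(t)$. This gives the first equality in \eqref{eq:Edotclosed}, exactly as in \eqref{eq:Edot}.

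For the closed form, insert $u'(t)=\frac1{2\sqrt t}\,(\eps,\,1-3t)^{\top}$ and the expression \eqref{eq:CDu}, written as $\CD{1/2}u(t)=\Gamma(\tfrac32)\,(\eps,\,1-\tfrac32 t)^{\top}$. The inner product is
\[
\frac{\Gamma(\tfrac32)}{2\sqrt t}\Bigl[\eps^2+(1-3t)(1-\tfrac32t)\Bigr]
=\frac{\Gamma(\tfrac32)}{2\sqrt t}\Bigl[1+\eps^2-\tfrac92 t(1-t)\Bigr],
\]
and changing its sign yields \eqref{eq:Edotclosed}. Because the prefactor is positive, the sign of the derivative is the sign of the quadratic $\tfrac92t(1-t)-(1+\eps^2)$. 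That quadratic is positive exactly between its roots $t_\pm$ from \eqref{eq:window}. The roots are real and distinct iff $8(1+\eps^2)<9$, i.e.\ $\eps<\tfrac1{2\sqrt2}$. They satisfy $0<t_-<t_+<1$, so the window lies inside $(0,1]$. As $\eps\downarrow0$ the discriminant tends to $1-\tfrac89=\tfrac19$, so $t_\pm\to\tfrac12(1\pm\tfrac13)$, which are $\tfrac23$ and $\tfrac13$.

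For the maximizer, write $b=1+\eps^2$ and $g(t)=\tfrac92 t^{1/2}-\tfrac92 t^{3/2}-b\,t^{-1/2}$, so that the energy derivative equals $\tfrac{\Gamma(3/2)}2\,g(t)$. Multiplying $g'(t)=0$ by $2t^{3/2}$ gives $\tfrac92t-\tfrac{27}2t^2+b=0$, i.e.\ $27t^2-9t-2b=0$. Its unique positive root is \eqref{eq:tstar}. Since $g(t)\to-\infty$ as $t\downarrow0$ and $g$ has only this one critical point in $(0,\infty)$, $g$ increases up to $t_*$ and decreases afterwards. Because $t_*<1$ (check: $27-9-2b>0$ for $b<9$), $t_*$ is the global maximizer on $(0,1]$.

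The numerical values for $\eps=0.1$ follow by substitution: $b=1.01$, $\sqrt{81+218.16}$ for $t_*$, $\sqrt{1-8.08/9}$ for $t_\pm$, and then evaluate $\tfrac{\sqrt\pi}{4\sqrt{t_*}}[\tfrac92t_*(1-t_*)-1.01]$. I expect no real obstacle. The only points needing care are two justifications: that the chain rule and the identity $\nabla E_0(u(t))=-\CD{1/2}u(t)$ hold pointwise for every $t>0$ (which relies on the Caputo power formula being valid for $p=\tfrac12,\tfrac32$), and that the critical-point argument establishes a global rather than merely local maximum.
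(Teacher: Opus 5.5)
Your proposal is correct and follows the paper's proof essentially line by line. You use the same substitution of $u'$ and $\CD{1/2}u$ into the chain rule, the same quadratic for the window, and the same critical-point equation $27t^2-9t-2(1+\eps^2)=0$ for $t_*$; you additionally check $0<t_-<t_+<1$ and $t_*<1$, which the paper leaves implicit.

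One step deserves tightening. A unique critical point together with $g\to-\infty$ as $t\downarrow0$ gives $g'>0$ before $t_*$, but it does not by itself give $g'<0$ after $t_*$. Your own identity $2t^{3/2}g'(t)=(1+\eps^2)+\tfrac92t-\tfrac{27}2t^2$ closes this at once: the right-hand side is a downward parabola that is positive at $t=0$, so $g'$ is positive before $t_*$ and negative after, exactly as the paper argues.
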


\begin{proof}
Insert \eqref{eq:CDu} and \[\partial_t u=\begin{pmatrix}\tfrac{\eps}{2\sqrt t} \\ \tfrac1{2\sqrt t}-\tfrac{3\sqrt t}2\end{pmatrix},\] into \eqref{eq:Edot}, factor
$\tfrac1{2\sqrt t}$, and use $\Gamma(\tfrac52)=\tfrac32\Gamma(\tfrac32)$ to get
\eqref{eq:Edotclosed}. The sign is that of
$\tfrac92 t(1-t)-(1+\eps^2)$, and
$t(1-t)>\tfrac{2(1+\eps^2)}9$ gives \eqref{eq:window}.
\begin{revision}
Writing the right-hand side of \eqref{eq:Edotclosed} as $F(t)$ and
differentiating gives
\[
  F'(t)=\frac{\Gamma(\tfrac32)}{8t^{3/2}}
  \bigl[2(1+\eps^2)+9t-27t^2\bigr].
\]
Thus the critical points satisfy
$27t^2-9t-2(1+\eps^2)=0$. This equation has exactly one positive root, namely
\eqref{eq:tstar}; moreover, $F'(t)>0$ before this root and $F'(t)<0$ after it.
Since $F(t)\to-\infty$ as $t\downarrow0$, $t_*$ is the global maximizer on
$(0,1]$.
\end{revision}
\end{proof}

Moreover, along the constructed trajectory one can integrate
\eqref{eq:Edotclosed} explicitly:
\[
  E_0(u(t))-E_0(u(0))
  =
  \Gamma(\tfrac32)\sqrt t
  \left[
    -(1+\eps^2)+\frac32 t-\frac9{10}t^2
  \right].
\]
Since the quadratic polynomial
\[
  \frac32 t-\frac9{10}t^2
\]
attains its maximum \(5/8\) on \([0,1]\), we have
\[
  E_0(u(t))-E_0(u(0))
  \le
  \Gamma(\tfrac32)\sqrt t
  \left[-(1+\eps^2)+\frac58\right]
  <0
\]
for every \(t\in(0,1]\). Thus the energy remains below its initial value,
even though it is strictly increasing on \((t_-,t_+)\).

\section{Well-posedness}

\begin{revision}
We now verify well-posedness in the solution class introduced in
Section~2. With $\alpha=\tfrac12$ and $u_0=0$, the mild formulation
\eqref{eq:mild} is
\begin{equation}\label{eq:volterra}
  u(t)=-\frac1{\Gamma(\tfrac12)}
  \int_0^t(t-s)^{-1/2}\nabla E(u(s))\,\dd s,
  \qquad 0\le t\le1.
\end{equation}
More generally, suppose that $\nabla E$ is globally Lipschitz with constant
$L$ on $\R^d$, and fix a finite $T>0$. On $C([0,T];\R^d)$ define
\[
  (\mathcal Tv)(t):=u_0-\frac1{\Gamma(\alpha)}
  \int_0^t(t-s)^{\alpha-1}\nabla E(v(s))\,\dd s.
\]
For the weighted norm
$\|v\|_\beta:=\sup_{0\le t\le T}e^{-\beta t}|v(t)|$, one has
\[
  \|\mathcal Tv-\mathcal Tw\|_\beta
  \le L\beta^{-\alpha}\|v-w\|_\beta,
\]
because
$\Gamma(\alpha)^{-1}\int_0^\infty r^{\alpha-1}e^{-\beta r}\,\dd r
=\beta^{-\alpha}$. Choosing $\beta>L^{1/\alpha}$ makes $\mathcal T$ a
contraction. Hence the Volterra equation has a unique continuous mild solution
on every prescribed finite interval.

The curve $u$ from \eqref{eq:u} belongs to $AC([0,1];\R^2)$ because
$u'\in L^1(0,1;\R^2)$, and it satisfies
$\CD{1/2}u=-\nabla E_0(u)$ pointwise on $(0,1]$. After the global cutoff below,
the same identity holds with $E$ in place of $E_0$. Consequently the explicit
curve is a mild solution of \eqref{eq:volterra}; uniqueness then identifies it
with the unique mild solution. It is also the unique absolutely continuous
classical Caputo solution on $[0,1]$.
\end{revision}

\begin{proposition}[Global semiconvex realization on the counterexample interval]
\label{prop:global}
\begin{revision}
For every $\eps\in(0,\tfrac1{2\sqrt2})$ and every $M>0$, there exist
$E\in C_c^\infty(\R^2)$ and $\Lambda<\infty$ such that
\[
  \Hess E\ge-\Lambda I\qquad\text{on }\R^2,
\]
and the curve $u$ from \eqref{eq:u} is the unique continuous mild solution on
$[0,1]$ of \eqref{eq:volterra}. Since $u\in AC([0,1];\R^2)$, it is also the
unique absolutely continuous classical solution satisfying
\[
  \CD{1/2}u(t)=-\nabla E(u(t)),\qquad u(0)=0,
  \qquad 0<t\le1.
\]
Moreover, all conclusions of Theorem~\ref{thm:increase} hold with $E$ in place
of $E_0$ on $[0,1]$.
\end{revision}
\end{proposition}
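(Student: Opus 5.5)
The plan is to obtain $E$ from $E_0$ by a smooth cutoff, $E:=\chi E_0$. Here $\chi\in C_c^\infty(\R^2)$ satisfies $0\le\chi\le1$ and $\chi\equiv1$ on a neighbourhood $U$ of the compact trace $K:=r([0,1])=u([0,1])$. Concretely, take $\chi\equiv1$ on the ball $B_R(0)$ with $R:=1+\max_{s\in[0,1]}|r(s)|$ and $\supp\chi\subset B_{R+1}(0)$. Then $E\in C_c^\infty(\R^2)$, since $E_0\in C^\infty$ by \eqref{eq:grad}.

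\textbf{Step 1: semiconvexity and Lipschitz gradient.} Since $E$ is smooth with compact support, $\Hess E$ is continuous and vanishes outside $\supp\chi$. Hence it is bounded on $\R^2$, and we may take $\Lambda:=\max_{\R^2}\|\Hess E\|<\infty$. Then $\Hess E\ge-\Lambda I$, and at the same time $\nabla E$ is globally Lipschitz with constant $L=\Lambda$. \Cref{lem:noblow} is not strictly needed for the semiconvexity bound, because compactness of the support already suffices. It does show, however, that nothing degenerates at the anchor $s=0$: $E_0$ is genuinely $C^2$ up to $r(0)$, so the explicit Hessian is finite there, and this is what the cutoff construction relies on.

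\textbf{Step 2: the equation along $u$.} On $U$ we have $E=E_0$, so $\nabla E=\nabla E_0$ at every point of $K$. By \eqref{eq:CDu}, \eqref{eq:grad} and $u(t)=r(\sqrt t)$, we get $\CD{1/2}u(t)=-\nabla E_0(u(t))=-\nabla E(u(t))$ for every $t\in(0,1]$, and $u(0)=0$. The curve is absolutely continuous because $u'(t)=O(t^{-1/2})\in L^1(0,1)$. Applying $J^{1/2}$ and using $J^{1/2}\CD{1/2}u=u-u(0)$ then shows that $u$ satisfies the Volterra equation \eqref{eq:volterra}, so $u$ is a continuous mild solution.

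\textbf{Step 3: uniqueness.} Since $\nabla E$ is globally Lipschitz by Step 1, the weighted-norm contraction argument of this section applies on $[0,1]$ and gives a unique continuous mild solution, which must therefore be $u$. Any absolutely continuous classical solution is a mild solution, by the equivalence noted in Section~2, and so it also coincides with $u$.

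\textbf{Step 4: the energy statements.} Since $E\circ u=E_0\circ u$ on $[0,1]$, because $u([0,1])\subset U$, all formulas of \Cref{thm:increase} and the subsequent energy bound transfer verbatim.

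The only delicate point is making sure the whole trace, including the anchor $u(0)=0$, lies in the interior of the region where $\chi\equiv1$. Choosing the radius $R$ as above handles this, since $K$ is compact and bounded by $\max_{s\in[0,1]}|r(s)|$.
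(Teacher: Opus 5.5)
Your proposal is correct and follows the paper's proof essentially step for step. You cut off $E_0$ by a smooth $\chi$ equal to $1$ near the trace; you use a large ball where the paper uses a tube $K_\rho$ around the graph of $\psi$, which makes no difference. You then get $\Lambda$ from compactness of the support, transfer the Caputo identity and the energy formulas because $E=E_0$ near $u([0,1])$, and conclude uniqueness from the weighted-norm contraction. Your aside that the construction relies on \Cref{lem:noblow} is unnecessary, since $E_0$ is a polynomial and $E_0\in C^\infty(\R^2)$ is immediate, but it does no harm.
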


\begin{proof}
Choose $\rho>0$ and define the closed tube
\[
  K_\rho
  :=
  \{(x,y):0\le x\le\eps,\ |y-\psi(x)|\le \rho\}.
\]
Then $r([0,1])\subset K_\rho$. Choose bounded open sets
$V_\rho,U_\rho\subset\R^2$ such that
\[
  K_\rho\Subset V_\rho\Subset U_\rho .
\]
Let \(\chi\in C_c^\infty(\R^2)\) satisfy
\[
  0\le \chi\le 1,\qquad
  \chi\equiv1 \text{ on } V_\rho,\qquad
  \supp\chi\subset U_\rho .
\]
Set
\[
  E:=\chi E_0 .
\]
Then $E\in C_c^\infty(\R^2)$. Since $\Hess E$ is continuous and compactly
supported, it is bounded. Hence, with
\begin{revision}
\[
  \Lambda:=\sup_{z\in\R^2}\|\Hess E(z)\|_{\mathrm{op}}<\infty,
\]
\end{revision}
we have
\[
  \Hess E\ge -\Lambda I \qquad\text{on }\R^2.
\]
Thus $E$ is globally semiconvex.

For $0\le t\le1$, one has $u(t)=r(\sqrt t)\in K_\rho\subset V_\rho$. Since
$\chi\equiv1$ on $V_\rho$, it follows that
\[
  \nabla E(u(t))=\nabla E_0(u(t))
  \qquad\text{for }0\le t\le1.
\]
The explicit construction gives
\[
  \CD{1/2}u(t)=-\nabla E_0(u(t))
  \qquad\text{for }0<t\le1,
\]
and therefore
\[
  \CD{1/2}u(t)=-\nabla E(u(t))
  \qquad\text{for }0<t\le1.
\]
Equivalently, $u$ satisfies \eqref{eq:volterra} on $[0,1]$.
\begin{revision}
Finally, since $E\in C_c^\infty(\R^2)$, its Hessian is globally bounded and
$\nabla E$ is globally Lipschitz. The contraction argument above therefore
implies that the explicitly constructed $u$ is the unique continuous mild
solution on $[0,1]$, and hence also the unique absolutely continuous classical
solution. The energy-increase computation is unchanged because $E=E_0$ on a
neighborhood of $u([0,1])$.
\end{revision}
\end{proof}

\begin{remark}[Conventions for semiconvexity]
We use ``globally semiconvex'' to mean that there exists $\Lambda\ge0$ such
that
\(
  \Hess E\ge -\Lambda I.
\)
Equivalently, $E+\frac{\Lambda}{2}|\cdot|^2$ is convex. Under the signed
``$\lambda$-convexity'' convention this is the same as $(-\Lambda)$-convexity.
\end{remark}

\begin{remark}[Coercivity]
\begin{revision}
The compactly supported energy $E=\chi E_0$ is bounded below. A coercive variant
can be written explicitly. Choose $\vartheta\in C_c^\infty(\R^2)$ with
$\vartheta\equiv1$ on a neighborhood of $K_\rho$, and for $\delta>0$ set
\[
  \widetilde E(z):=\vartheta(z)E_0(z)
  +(1-\vartheta(z))\frac{\delta}{2}|z|^2.
\]
Then $\widetilde E=E_0$ near the trajectory and
$\widetilde E(z)=\frac\delta2|z|^2$ outside a compact set, so $\widetilde E$ is
coercive. Its Hessian is bounded below on the compact transition region and
equals $\delta I$ outside that region; hence $\widetilde E$ is globally
semiconvex and leaves the constructed trajectory unchanged.
\end{revision}
\end{remark}

\begin{remark}[On the value of the global semiconvexity modulus]
\begin{revision}
The lemma gives a finite lower Hessian bound along the curve. Since
$E_0\in C^\infty(\R^2)$, compactness also gives, for every fixed $\rho>0$,
\[
  -\inf_{K_\rho}\lambda_{\min}(\Hess E_0)<\infty.
\]
The numerical value of an on-tube or global semiconvexity modulus depends on
$\eps$, $M$, the tube radius, and the chosen cutoff. In particular, derivatives
of the cutoff can make the global modulus substantially larger than the
on-curve lower bound. No quantitative estimate of this inflation is used in the
argument; only finiteness of the modulus is required.
\end{revision}
\end{remark}

\begin{remark}[Heuristic instability mechanism]
The following observation is heuristic and is not used in the proof of the
counterexample. Linearizing \eqref{eq:flow} about the constructed solution gives
formally
\begin{equation}\label{eq:lin}
  \CD{1/2}\delta=-H(t)\delta,\qquad
  H(t)=\Hess E(u(t)).
\end{equation}
Along the constructed trajectory the Hessian $H(t)$ has negative directions for
part of the interval. Frozen-coefficient scalar components with negative
curvature lead to equations of the form
\[
  \CD{1/2}\eta=\mu\eta,\qquad \mu>0,
\]
whose solutions are
\[
  \eta(t)=\eta_0 E_{1/2}(\mu\sqrt t).
\]
\begin{revision}
Here $E_\alpha(z):=\sum_{k=0}^\infty z^k/\Gamma(\alpha k+1)$ denotes the
one-parameter Mittag--Leffler function.
\end{revision}
This suggests a possible amplification mechanism for perturbations of the
energy-increasing trajectory. Since $H(t)$ is time dependent and its eigenspaces
need not commute in time, this frozen-coefficient comparison does not by itself
prove nonlinear or even linear instability of the full time-dependent problem.
\end{remark}

\section{Numerical validation}\label{sec:numval}

\begin{revision}
The construction is validated directly against the closed forms of
Section~\ref{sec:lemma} and the energy identity \eqref{eq:Edotclosed}. For
reproducibility, let
\[
  \mathcal G:=\{j/10^4:j=1,\ldots,10^4\}.
\]
The gradient-trace residual
\[
  R_{\mathrm{tr}}
  :=\max_{t\in\mathcal G}
  \bigl\|\nabla E_0(u(t))+\CD{1/2}u(t)\bigr\|
\]
reported in Table~\ref{tab:checks} was evaluated in IEEE double precision on
$\mathcal G$. The Hessian minimum was obtained by bounded
one-dimensional minimization of the exact smallest-eigenvalue formula, with
absolute tolerance $10^{-12}$. These numerical checks are not used as premises
in the analytic proof.
\end{revision}

\begin{table}[htp!]
\begin{revision}
\centering
\caption{Validation of the defining properties ($\eps=0.1$, $M=1$).}
\label{tab:checks}
\begin{tabular}{p{0.57\linewidth}p{0.33\linewidth}}
\hline\noalign{\smallskip}
Quantity & Value\\
\noalign{\smallskip}\hline\noalign{\smallskip}
Gradient-trace residual $R_{\mathrm{tr}}$
& $4.44\times10^{-16}$\\
Energy-increase window $(t_-,t_+)$
& $(0.34013895,\,0.65986105)$\\
Maximizer $t_*$ of $\frac{\dd}{\dd t}E_0(u(t))$
& $0.48696745$\\
Maximum energy derivative
& $0.07253820$\\
Hessian minimum
$\min_{0\le s\le1}\lambda_{\min}(\Hess E_0(r(s)))$
& $-33.48349$  at $s=0.456109$\\
\noalign{\smallskip}\hline
\end{tabular}
\end{revision}
\end{table}

\begin{revision}
Figure~\ref{fig:energy} confirms the strict energy increase. As $t\downarrow0$,
$\frac{\dd}{\dd t}E_0(u(t))\to-\infty$; the left panel therefore starts the
curve at $t=0.02$ and clips values below $-0.2$. The derivative crosses zero at
$t_-$, is positive on $(t_-,t_+)$, reaches its maximum
$0.07253820$ at $t_*\approx0.48696745$, and becomes negative again at $t_+$.
The dashed lines at $t=\tfrac13$ and $t=\tfrac23$ mark, respectively, the sign
changes of $u_2'(t)$ and $(\CD{1/2}u(t))_2$; they are not zeros of the energy
derivative. The right panel shows that the shifted energy
$E_0(u(t))-E_0(u(0))$ rises by approximately $1.548\times10^{-2}$ on the
shaded window, while remaining below its initial value in accordance with
\eqref{eq:bounded}.
\end{revision}

\begin{figure}[htp!]
\centering
\begin{minipage}{0.48\linewidth}\centering
\begin{tikzpicture}
\begin{axis}[width=0.90\linewidth,height=4.8cm,
  xlabel={$t$},ylabel={$\tfrac{\dd}{\dd t}E_0(u(t))$},
  xmin=0,xmax=1,ymin=-0.2,ymax=0.1,
  axis lines=left,tick align=outside,
  every axis title/.style={below right,at={(0,1)}}]
  \fill[blue!12] (axis cs:0.34014,-0.2) rectangle (axis cs:0.65986,0.1);
  \draw[densely dashed,black!55] (axis cs:0.33333,-0.2)--(axis cs:0.33333,0.1);
  \draw[densely dashed,black!55] (axis cs:0.66667,-0.2)--(axis cs:0.66667,0.1);
  \draw[black!45] (axis cs:0,0)--(axis cs:1,0);
  \addplot[blue,thick,domain=0.02:1,samples=200]
    {0.886226925452758/(2*sqrt(x))*(4.5*x*(1-x)-1.01)};
\end{axis}
\end{tikzpicture}
\end{minipage}\hfill
\begin{minipage}{0.48\linewidth}\centering
\begin{tikzpicture}
\begin{axis}[width=0.90\linewidth,height=4.8cm,
  xlabel={$t$},ylabel={$E_0(u(t))-E_0(u(0))$},
  xmin=0.25,xmax=0.75,ymin=-0.315,ymax=-0.293,
  axis lines=left,tick align=outside,
  every axis title/.style={below right,at={(0,1)}},
  yticklabel style={/pgf/number format/fixed,/pgf/number format/precision=3}]
  \fill[blue!12] (axis cs:0.34014,-0.315) rectangle (axis cs:0.65986,-0.293);
  \addplot[red,thick,domain=0.25:0.75,samples=200]
    {0.886226925452758*sqrt(x)*(-1.01+1.5*x-0.9*x^2)};
\end{axis}
\end{tikzpicture}
\end{minipage}
\caption{\rev{Energy increase for $\eps=0.1$. \emph{Left:} the ordinary energy
derivative, plotted for $t\ge0.02$ and vertically clipped below $-0.2$, is
positive precisely on the shaded window $(t_-,t_+)$. The dashed lines mark the
separate sign changes of $u_2'$ and $(\CD{1/2}u)_2$ at $t=\tfrac13$ and
$t=\tfrac23$. \emph{Right:} the shifted energy is strictly increasing on the
same window while remaining below its initial value.}}
\label{fig:energy}
\end{figure}

\begin{revision}
Figure~\ref{fig:hess} shows the smallest eigenvalue of $\Hess E_0$ along the
curve $r(s)$ for $\eps=0.1$ and $M=1$. It vanishes at the anchor $s=0$, reaches
approximately $-33.48349$ at $s\approx0.456109$, and stays bounded for all
$s\in[0,1]$, in agreement with Lemma~\ref{lem:noblow}.
\end{revision}

\begin{figure}[htp!]
\centering
\begin{tikzpicture}
\begin{axis}[width=0.62\linewidth,height=4.8cm,
  xlabel={$s=\sqrt t$},ylabel={$\lambda_{\min}(\Hess E_0(r(s)))$},
  xmin=0,xmax=1,axis lines=left,tick align=outside]
  \draw[black!45] (axis cs:0,0)--(axis cs:1,0);
  \addplot[blue,thick,domain=0:1,samples=250]
    {(
      ((-2.65868077635827*x)*(10*(1-3*x^2))/0.1 + (10*(1-3*x^2))^2) + 1
      - sqrt(((-2.65868077635827*x)*(10*(1-3*x^2))/0.1 + (10*(1-3*x^2))^2 - 1)^2
      + 4*(-(-2.65868077635827*x)/0.1 - 10*(1-3*x^2))^2)
    )/2};
\end{axis}
\end{tikzpicture}
\caption{\rev{Smallest eigenvalue of $\Hess E_0$ along $r(s)$ for
$\eps=0.1$ and $M=1$: finite and bounded below, with minimum approximately
$-33.48349$ at $s\approx0.456109$ and no anchor blow-up.}}
\label{fig:hess}
\end{figure}

\section{Conclusion}

The proposed counterexample refutes the implication
\[
  \text{finite semiconvexity modulus}
  \quad\Longrightarrow\quad
  \frac{\dd}{\dd t}E(u(t))\le0
  \text{ along every solution}.
\]
The counterexample is constructed on the finite interval $[0,1]$, which is
sufficient because the energy-increase window is a nonempty subinterval of
$(0,1)$.

\begin{revision}
The convex case appears more rigid, and monotonicity can be verified directly
for convex quadratic energies. Let
\[
  E(x)=\frac12x^\top Ax+b^\top x+c,
  \qquad A=A^\top\succeq0.
\]
After orthogonally diagonalizing $A$, write the corresponding coordinates as
$y_i$ and the components of the transformed vector $b$ as $b_i$. For an
eigenvalue $\lambda_i>0$, the shifted variable
$z_i:=y_i+b_i/\lambda_i$ satisfies
\[
  z_i(t)=z_i(0)E_\alpha(-\lambda_i t^\alpha),
\]
and the associated energy contribution is
$\frac12\lambda_i z_i(t)^2-\frac{b_i^2}{2\lambda_i}$. The function
$x\mapsto E_\alpha(-x)$ is nonnegative and nonincreasing for
$0<\alpha<1$ \cite{Pollard1948}, so this contribution is nonincreasing. If
$\lambda_i=0$, then
\[
  y_i(t)=y_i(0)-\frac{b_i}{\Gamma(1+\alpha)}t^\alpha,
\]
and the affine energy contribution $b_i y_i(t)$ is again nonincreasing. Summing
over the modes proves monotonicity of the original energy for every convex
quadratic $E$.

This motivates the following finite-dimensional smooth conjecture.
\begin{quote}\itshape
Let $E\in C^1(\R^d)$ be convex, and assume that $\nabla E$ is globally
Lipschitz. Let $u$ be the unique continuous mild solution on $[0,T]$, in the sense that
\[
\begin{aligned}
  u(t)&=u_0-\frac1{\Gamma(\alpha)}
  \int_0^t(t-s)^{\alpha-1}\nabla E(u(s))\,\dd s,\\
  &\hspace{7em}0\le t\le T,\qquad 0<\alpha<1.
\end{aligned}
\]
Then $t\mapsto E(u(t))$ is nonincreasing on $[0,T]$.
\end{quote}
Theorem~\ref{thm:increase} shows that such a statement cannot be extended to
the class of globally semiconvex, possibly nonconvex energies. A separate
question, not addressed here, is whether pointwise monotonicity can be recovered
for certain nonconvex energies under additional dynamical stability assumptions,
such as a Mittag--Leffler spectral condition or a fractional Lyapunov estimate.
\end{revision}


\end{document}